\numberwithin{equation}{section}
\newcommand{\esssup}{\mathop{\mathrm{ess\,sup}}}
\newtheorem{teo}{Theorem}[section]
\newtheorem{prop}[teo]{Proposition}
\theoremstyle{definition}
\newtheorem{rem}[teo]{Remark}
\numberwithin{equation}{section}
\def\a{\alpha}
\def\b{\beta}
\def\l{\lambda }
\def\o{\omega}
\def\R{\mathbb{R}}
\def\d{\delta}
\def\e{\varepsilon}
\def\f{\varphi}
\def\s{\sigma}
\def\mes{\operatorname{mes}}
\def\esup{\operatorname{ess ~sup}}
\def\Lip{\operatorname{Lip}}
\begin{document}

%%%%%%%%%%%%%%%%%%%%%%%%%%%%%%%%%%%%%%%%%%%%%%%%%%%%%%%%%%%%%%%%%%%%%%

\title[Sections of functions]{Sections of functions and Sobolev type inequalities}

\author[V.I. Kolyada]{V.I. Kolyada}
\address{Department of Mathematics\\
Karlstad University\\
Universitetsgatan 2 \\
651 88 Karlstad\\
SWEDEN} \email{viktor.kolyada@kau.se}

\subjclass[2000]{ 46E30 (primary), 46E35, 42B35 (secondary)}

\keywords{Rearrangements; Embeddings;   Mixed norms; Lipschitz
condition; Moduli of continuity}

\begin{abstract} We study functions of two variables whose sections by the lines parallel to the coordinate axis satisfy
Lipschitz condition of the order $0<\a\le 1.$ We prove that if for a function $f$ the $\operatorname{Lip} \a-$ norms of these sections belong to the Lorentz space $L^{p,1}(\R) \,(p=1/\a),$ then $f$ can be modified on a set of measure zero so as
to become bounded and uniformly continuous on $\R^2.$ For $\a=1$ this gives
an extension of  Sobolev's theorem on continuity of functions of
the space $W_1^{2,2}(\R^2)$. We show that the exterior
$L^{p,1}-$ norm cannot be replaced by a weaker Lorentz norm $L^{p,q}$ with $q>1$.

\end{abstract}

\dedicatory{Dedicated to O.V. Besov  on the occasion of his 80th
birthday}

%%%%%%%%%%%%%%%%%%%%%%%%%%%%%%%%%%%%%%%%%%%%%%%%%%%%%%%%%%%%%%%%%%%%%%

\maketitle

\date{}

\maketitle

\vskip 8pt

\section{Introduction}

The classical embedding with limiting exponent
$$
W_1^1(\R^n)\subset L^{n/(n-1)}(\R^n)
$$
for the Sobolev space $W_1^1$ was proved independently by
Gagliardo \cite{Ga} and Nirenberg  \cite{Nir}. Gagliardo's
 approach was based on estimates of
certain mixed norms. A refinement of these estimates and a further
development of Gagliardo's method were obtained by  Fournier
\cite{Four}. Different extensions of these results and their
applications have been  studied, e.g., in  the works \cite{Alg},
\cite{AK}, \cite{BF},  \cite{K2006}, \cite{K2012}, \cite{Milman}.

In what follows we consider functions of two variables. Let a
function $f$ be defined on $\R^2.$ For any fixed $x\in \R,$  the
$x$-section of $f$ (denoted by $f_x$) is the function of the
variable $y$ defined by $f_x(y)=f(x,y)\,\, (y\in \R).$ Similarly,
for a fixed $y\in \R,$ the $y-$section of $f$ is the function
$f_y(x)=f(x,y)\,\, (x\in \R)$ of the variable $x$. The
Gagliardo-Fournier mixed norm space is defined as
\begin{equation}\label{gag_four}
L^1_x[L^\infty_y]\cap L^1_y[L^\infty_x]\equiv
L^1[L^\infty]_{\operatorname{sym}}.
\end{equation}
Thus, for a function $f\in L^1[L^\infty]_{\operatorname{sym}}$
almost all linear sections are essentially bounded, and the
$L^\infty-$ norms of these sections belong to $L^1(\R).$ As it was
shown in \cite{Ga}, \cite{Four}, these conditions imply certain
integrability properties of $f.$

The question studied in this paper is a part of a general problem
which can be formulated as follows: How do the smoothness
conditions imposed on linear sections of a function affect its
{\it global} continuity properties ? More precisely,  we shall
consider mixed norm spaces of functions whose linear sections
satisfy Lipschitz conditions.

For any function $\f$ on $\R,$ set $\Delta_h\f(t)=\f(t+h)-\f(t).$
Let $\a\in (0,1].$ Denote by $\operatorname{Lip} \a$ the class of
all functions $\f\in L^\infty(\R)$ such that
$$
||\f||^*_{\operatorname{Lip} \a}=\sup_{h>0}
h^{-\a}||\Delta_h\f||_\infty<\infty.
$$
We set also
$$
||\f||_{\operatorname{Lip}
\a}=||\f||_\infty+||\f||^*_{\operatorname{Lip} \a}.
$$

Some mixed norm norm spaces of functions with smoothness
conditions on sections were studied in the dissertation
\cite{Alg}. In particular, it was proved in \cite[Theorem
8.13]{Alg} that every function
$$
f\in L^p_x[(\operatorname{Lip}\a)_y]\cap
L^p_y[(\operatorname{Lip}\a)_x],\quad\mbox{where}\quad 0<\a\le 1,
\,1/\a<p<\infty,
$$
is equivalent to a bounded and uniformly continuous function on
$\R^2.$ However, the limiting case $p=1/\a$ was left open.

 The main objective of the present paper is to study this limiting case. Our interest to this problem
     is partly motivated by its close relation to embedding of the Sobolev space $W_1^{2,2}(\R^2)$.
Note that this relation is similar to the one between  embeddings
of Gagliardo-Fournier space (\ref{gag_four}) and the Sobolev space
$W_1^1(\R^2).$

Denote by $W_1^{2,2}(\R^2)$ the space of all functions $f\in
L^1(\R^2)$ for which  pure distributional partial derivatives of the second order
$D_1^2f$ and $D_2^2f$ exist and belong to $L^1(\R^2)$.  It is
well known that this doesn't imply the existence of mixed
derivatives.

Sobolev's theorem asserts that every function $f\in
W_1^{2,2}(\R^2)$ can be modified on a set of measure zero so as to
become uniformly continuous and bounded on $\R^2$ (see
\cite[Theorems 10.1 and  10.4]{BIN}).

We have the following embedding
\begin{equation}\label{embed_1}
W_1^{2,2}(\R^2)\subset L^1_x[(\operatorname{Lip} 1)_y]\cap
L^1_y[(\operatorname{Lip} 1)_x]\equiv L^1[\operatorname{Lip}
1]_{\operatorname{sym}}
\end{equation}
(see Proposition \ref{EMBED} below).

Let $\a\in (0,1].$ Assume that almost all $x-$sections and almost
all $y-$sections of $f$ belong to $\operatorname{Lip} \a$. We
consider the functions
\begin{equation}\label{mathcal}
\mathcal{N}_{\a}^{(1)}f(y)=||f_y||_{\operatorname{Lip}
\a}\quad\mbox{and}\quad
\mathcal{N}_{\a}^{(2)}f(x)=||f_x||_{\operatorname{Lip} \a}.
\end{equation}
The integrability properties  of these functions provide important
characteristics of smoothness of sections. A natural  measure of
these properties can be obtained in terms of rearrangements of functions (\ref{mathcal}) and their Lorentz norms (cf.
\cite{AK}).

Denote by $S_0(\R^n)$ the class of all measurable and almost
everywhere finite functions $f$ on $\mathbb{R}^n$ such that
\begin{equation*}
\lambda_f (y) \equiv | \{x \in \mathbb{R}^n : |f(x)|>y \}| <
\infty\quad \text{for each $y>0$}.
\end{equation*}
{\it{A non-increasing rearrangement}} of a function $f \in
S_0(\mathbb{R}^n)$ is a non-negative and non-increasing function
$f^*$ on $\mathbb{R}_+ \equiv (0, + \infty)$ which is
equimeasurable with $|f|$, that is,  for any $y>0$
\begin{equation*}
|\{t\in \mathbb{R}_+ : f^*(t)>y\}|= \lambda_f (y)
\end{equation*}
(see \cite[Ch. 1]{BS}). The Lorentz space $L^{p,q}(\R^n)\,\,
 (p,q \in [1,\infty))$ is defined as the class  of all functions  $f\in
S_0(\R^n)$ such that
$$
||f||_{L^{p,q}}\equiv||f||_{p,q}=\left( \int_0^\infty \left(
t^{1/p} f^*(t) \right)^q \frac{dt}{t} \right)^{1/q} < \infty.
$$
We have that $||f||_{p,p}=||f||_p.$ For a fixed $p$, the Lorentz
spaces $L^{p,q}$ strictly increase as the secondary index $q$
increases; that is, the strict embedding $L^{p,q}\subset
L^{p,r}~~~(q<r)$ holds (see \cite[Ch. 4]{BS}).

For any $1\le p<\infty$, denote
$$
\mathcal U_p(\R^2)=L^{p,1}_x\left[\left(\operatorname{Lip}
\frac1p\right)_y\right]\bigcap
L^{p,1}_y\left[\left(\operatorname{Lip}
\frac1p\right)_x\right]\equiv L^{p,1}\left[\operatorname{Lip}
\frac1p\right]_{\operatorname{sym}}.
$$
For a function $f\in \mathcal U_p(\R^2),$ set
$$
||f||_{\mathcal
U_p(\R^2)}=||\mathcal{N}_{1/p}^{(1)}f||_{p,1}+||\mathcal{N}_{1/p}^{(2)}f||_{p,1}.
$$

By (\ref{embed_1}), $W_1^{2,2}(\R^2)\subset \mathcal U_1(\R^2).$

The  paper is organized as follows.  In Section 2 we prove  that
every function $f\in \mathcal U_p(\R^2)\,\,(1\le p<\infty)$ can be
modified on a set of measure zero so as to become bounded and
uniformly continuous on $\R^2$, and we give an estimate of the
modulus of continuity of the modified function. This is the main
result of the paper. In particular, it provides a generalization
of the Sobolev theorem on continuity of functions in
$W_1^{2,2}(\R^2).$ We show also that the result is optimal in the
sense that the exterior $L^{p,1}-$ norm cannot be replaced by a
weaker Lorentz norm $L^{p,q}$ with $q>1$. In Section 3 we show
that the spaces $\mathcal U_p(\R^2)$ increase as $p$ increases,
and we prove embedding (\ref{embed_1}).

\vskip 5pt

\section{Continuity}

We recall some definitions and results which will be used in the sequel.

If $f$ is a continuous function on $\R^2$, then its modulus of
continuity is defined by
$$
\o(f;\d)=\sup_{0\le h,k\le\d}|f(x+h,y+k)-f(x,y)|.
$$

For any function $f\in S_0(\R^n)$, denote
$$
f^{**}(t)= \frac{1}{t} \int_0^t f^*(u) du.
$$

We shall use  the following inequality. Let $g\in S_0(\R^n)$. Then
for any $0<s<t\le\infty$
\begin{equation}\label{rear}
g^*(s)-g^*(t)\le \frac{1}{\ln 2}\int_{s/2}^t
[g^*(u)-g^*(2u)]\frac{du}{u}.
\end{equation}
Indeed,
$$
\begin{aligned}
&\int_{s/2}^t[g^*(u)-g^*(2u)]\frac{du}{u}\\
&=\int_{s/2}^s g^*(u)\frac{du}{u}-\int_t^{2t}g^*(u)\frac{du}{u}
\ge \left[g^*(s)-g^*(t)\right]\ln 2.
\end{aligned}
$$

It is easy to see that for any $g\in S_0(\R^n)$
\begin{equation}\label{rear1}
\lim_{t\to +\infty} g^*(t)=0.
\end{equation}

Let $E\subset \R^2$ be a measurable set. For any $y\in
\R,$ denote by $E(y)$ the $y-$section of the set $E,$ that is
$$
E(y)=\{x\in \R: (x,y)\in E\}.
$$
\textit{The essential projection} of $E$ onto the $y-$axis  is
defined to be the set $\Pi$ of all  $y\in \R$ such that $E(y)$ is
measurable and $\mes_1 E(y)>0.$ Since the function $y\mapsto
\mes_1 E(y)$ is measurable, the essential projection is a
measurable set in $\R$. Similarly we define the $x-$ sections and
the essential projection of $E$ onto the $x-$ axis.

 \begin{teo}\label{Second}Let $1\le p<\infty.$ Then every function $f\in \mathcal U_p(\R^2)$
 belongs to $S_0(\R^2)$ and can be modified on a set of measure zero so as to become uniformly
 continuous and bounded on $\R^2.$
 Moreover, if
 $$
\f_1(y)=||f_y||^*_{\operatorname{Lip}\frac1p}\quad\mbox{and}\quad
\f_2(x)=||f_x||^*_{\operatorname{Lip}\frac1p},
$$
then
 \begin{equation}\label{bound}
 ||f||_\infty\le c \left(||\f_1||_{p,1}+||\f_2||_{p,1}\right)
 \end{equation}
 and for the modified function $\bar {f}$ we have that
\begin{equation}\label{omega}
\omega(\bar f;\delta)\le c \int_0^\delta
\left[\f_1^*(t)+\f_2^*(t)\right]t^{1/p-1}\,dt.
\end{equation}

\end{teo}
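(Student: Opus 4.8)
The plan is to reduce both \eqref{bound} and \eqref{omega} to the dyadic decrement inequality \eqref{rear}, applied to $f$ globally for \eqref{bound} and to $f$ localized on cubes of side $\delta$ for \eqref{omega}. First one reduces to the case in which every linear section of $f$ is continuous (replacing each section by its H\"older-continuous representative changes neither $f^*$ nor $\f_1,\f_2$ nor the conclusion). Writing $F_1(y)=\|f_y\|_\infty$ and $F_2(x)=\|f_x\|_\infty$, one has $F_1\le\mathcal N^{(1)}_{1/p}f$, $F_2\le\mathcal N^{(2)}_{1/p}f$, hence $F_1,F_2,\f_1,\f_2\in L^{p,1}\subset S_0$; since $|f(x,y)|\le\min(F_1(y),F_2(x))$ one gets $\{|f|>\lambda\}\subset\{F_2>\lambda\}\times\{F_1>\lambda\}$, so $f\in S_0(\R^2)$.

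The key step is an \emph{expansion estimate}. Fix $\Lambda>\Lambda'\ge0$ and put $E=\{|f|>\Lambda\}$, $E'=\{|f|>\Lambda'\}$; let $\Pi_y(E)$ be the essential projection of $E$ onto the $y$-axis, $\beta=\mes_1\Pi_y(E)$. For $y\in\Pi_y(E)$ the section $E(y)$ is nonempty and open, and the $x$-Lipschitz condition forces $E'(y)\supset E(y)+(-\rho(y),\rho(y))$ with $\rho(y)=\bigl((\Lambda-\Lambda')/\f_1(y)\bigr)^{p}$, so $\mes_1 E'(y)\ge\mes_1 E(y)+2\rho(y)$. Integrating over $\Pi_y(E)$ and using that $\{y\in\Pi_y(E):\f_1(y)\le\f_1^*(\beta/2)\}$ has measure $\ge\beta/2$, one obtains
$$
\mes_2 E'\ \ge\ \mes_2 E+(\Lambda-\Lambda')^{p}\,\beta\,\f_1^*(\beta/2)^{-p},
$$
and the same with $(\f_1,\beta)$ replaced by $(\f_2,\gamma)$, $\gamma=\mes_1\Pi_x(E)$; moreover $\mes_2 E\le\beta\gamma$ since $E\subset\Pi_x(E)\times\Pi_y(E)$. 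Taking $\Lambda=f^*(u)-\e$, $\Lambda'=f^*(2u)$ (so $\mes_2 E>u$, $\mes_2 E'\le2u$, $\beta\gamma>u$, hence $\max(\beta,\gamma)\ge\sqrt{\beta\gamma}>\sqrt u$), applying the inequality to whichever of $\beta,\gamma$ is the larger, and using monotonicity of $\f_i^*$ and $\e\to0$, one arrives at
$$
f^*(u)-f^*(2u)\ \le\ c\,\bigl[\f_1^*(\sqrt u/2)+\f_2^*(\sqrt u/2)\bigr]\,u^{1/(2p)},\qquad u>0.
$$

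Inequality \eqref{bound} now follows: by \eqref{rear} (with $s\to0^+$, $t\to\infty$) and \eqref{rear1}, $\|f\|_\infty=f^*(0^+)\le\frac1{\ln 2}\int_0^\infty[f^*(u)-f^*(2u)]\frac{du}{u}$, and inserting the estimate just proved and substituting $v=\sqrt u/2$ gives $\|f\|_\infty\le c\int_0^\infty[\f_1^*(v)+\f_2^*(v)]v^{1/p-1}dv=c(\|\f_1\|_{p,1}+\|\f_2\|_{p,1})$. For \eqref{omega}: since $f$ is bounded, $\bar f(z):=\lim_{r\to0}|Q_r(z)|^{-1}\int_{Q_r(z)}f$ (with $Q_r(z)$ the cube of edge $r$ centered at $z$) exists a.e.\ and equals $f$ a.e., and it is enough to bound the (essential) oscillation of $\bar f$ over an arbitrary cube $Q$ of side $\delta$, since the supremum of these dominates $\omega(\bar f;\delta)$. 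For such a $Q$ one runs the expansion estimate for $h=(\bar f-\inf_Q\bar f)\mathbf 1_Q$: the essential projections of $\{h>\Lambda\}$ are now bounded by $\delta$ while their product still exceeds $u$, so the larger one lies in $(\sqrt u,\delta]$, and one gets $h^*(u)-h^*(2u)\le c[\f_1^*(\sqrt u/2)+\f_2^*(\sqrt u/2)]u^{1/(2p)}$ for $0<u<\delta^2$. Since $h\ge0$ we have $h^*(\delta^2)=0$ and $h^*(0^+)=\operatorname{osc}_Q\bar f$; then \eqref{rear} (with $s\to0^+$, $t=\delta^2$) and the substitution $v=\sqrt u/2$ give $\operatorname{osc}_Q\bar f\le c\int_0^{\delta}[\f_1^*(v)+\f_2^*(v)]v^{1/p-1}dv$, which is \eqref{omega}; the right-hand side tends to $0$ with $\delta$ (as $\f_i\in L^{p,1}$), so $\bar f$ is uniformly continuous and extends uniquely to such a function on $\R^2$.

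The main obstacle is the expansion estimate, and specifically extracting the \emph{rearrangements} $\f_i^*$ — rather than the uncontrolled pointwise values $\f_i(y)$ — from the section data: this is what forces the passage through the essential projection and the use of its ``lower half'', and it is also where the two one-variable estimates are made to cooperate, via $\max(\beta,\gamma)\ge\sqrt{\beta\gamma}\ge\sqrt u$ together with the change of variable $v=\sqrt u$. In the localized version for \eqref{omega} one has in addition to cope with the fact that the $x$-neighbourhoods $E(y)+\bigl(-\rho(y),\rho(y)\bigr)$ may protrude from $Q$; this requires truncating the expansion at scale $\delta$ (equivalently, performing it inside a concentric cube of twice the side) and a little extra care with constants, but is otherwise routine.
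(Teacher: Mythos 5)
Your first half is correct, and it reaches the paper's key decrement inequality $f^*(u)-f^*(2u)\le c\,u^{1/(2p)}[\f_1^*(\sqrt u/2)+\f_2^*(\sqrt u/2)]$ by a genuinely different mechanism: you expand the level set $\{|f|>\Lambda\}$ section by section by $\rho(y)=((\Lambda-\Lambda')/\f_1(y))^{p}$ and compare areas, whereas the paper takes a pair $A\subset B$ with $\mes_2A=t$, $\mes_2B=2t$ and uses an averaging-over-shifts argument ($h\in(0,8\sqrt t]$) that works directly with essential suprema. Your route is arguably cleaner and gives an explicit constant, at the price of first passing to a representative with (one-directionally) continuous sections so that the H\"older bound holds pointwise; that reduction is stated in one parenthetical and does need a word (e.g.\ mollify in $x$ only when the $y$-projection is the large one, which leaves $f$, $f^*$ and each $\f_1(y)$ unchanged, and symmetrically for the other case), but it is a minor point. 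The deduction of \eqref{bound} from \eqref{rear} and \eqref{rear1} is the same as in the paper.

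The genuine gap is in your treatment of \eqref{omega}. You need the decrement inequality for the truncated function $h=(\bar f-\inf_Q\bar f)\mathbf 1_Q$ for all $0<u<\delta^2$, and you dismiss the boundary problem (``the expansion may protrude from $Q$'') as routine, to be fixed by truncating $\rho$ at scale $\delta$ or doubling the cube. Neither fix works, because the obstruction is not the size of $\rho(y)$ but the absence of room inside $I$: if the level set $E=\{h>\Lambda\}$ hugs the boundary of $Q$ (e.g.\ a thin frame along $\partial Q$, which has small measure, so this is compatible with every $u\ll\delta^2$), then for most $y$ the clipped expansion $(E(y)+(-\rho(y),\rho(y)))\cap I$ gains \emph{nothing}, and a concentric cube of twice the side has exactly the same defect. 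The obvious fallback --- a high point of $E(y)$ and a low point of $I\setminus E'(y)$ lie within distance $\delta$, so $\Lambda-\Lambda'\le\f_1(y)\,\delta^{1/p}$ --- is too weak: it replaces $u^{1/(2p)}$ by $\delta^{1/p}$, and $\int_0^{\delta^2}\f_1^*(\sqrt u/2)\,du/u$ diverges, so the oscillation bound is lost for small $u$. A correct repair exists but is a real additional idea, not bookkeeping: for $u\le c\delta^2$ one can note that every point of $I\setminus E'(y)$ is at distance at least $\rho(y)$ from $E(y)$, hence $\rho(y)\le\mes_1E'(y)$, and for a positive proportion of the good $y$'s one has $\mes_1E'(y)\le Cu/\b\le C\sqrt u$, which restores the bound without any area counting; and the top range $u\sim\delta^2$ (where $\Lambda'=h^*(2u)$ may be $0$ and $E'$ may exhaust $Q$, so no low point exists in any section) needs a separate argument or a different normalization (median instead of infimum, plus a two-move comparison through well-chosen rows and columns). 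The paper sidesteps all of this by never localizing: it works with the Steklov means $f_h$ and applies the \emph{global} decrement argument to $g_h=f-f_h$, whose section seminorms are $\le\f_i+\f_i^{**}(h)$, together with the bound $g_h^*(h^2)\le 2h^{1/p}[\f_1^{**}(h)+\f_2^{**}(h)]$; this yields $\|g_h\|_\infty$ and hence both the continuous representative and \eqref{omega} at once. As it stands, your proof of \eqref{bound} is complete, but the proof of \eqref{omega} (and hence of uniform continuity) is not.
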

\begin{proof} Let $f\in \mathcal U_p(\R^2)$. First we show that $f\in S_0(\R^2).$ Set
$$
\psi_1(y)= ||f_y||_\infty, \quad \psi_2(x)= ||f_x||_{\infty}.
$$
Then
\begin{equation}\label{infty}
|f(x,y)|\le \min\left(\psi_1(y),\psi_2(x)\right)\quad\mbox{for
almost all} \quad (x,y)\in \R^2.
\end{equation}
It follows that for any $\a>0$ the set $\{(x,y):|f(x,y)|>\a\}$ is
contained in the cartesian product
$\{x:\psi_2(x)>\a\}\times\{y:\psi_1(y)>\a\}$, except a subset of
measure zero. Thus, $\l_f(\a)\le \l_{\psi_1}(\a)\l_{\psi_2}(\a).$
Since $\psi_1,\psi_2\in L^{p,1}(\R),$ this implies that
$\l_f(\a)<\infty$ for any $\a>0.$

Now we shall prove that for any $t>0$
\begin{equation}\label{3_1}
f^*(t)-f^*(2t)\le c
t^{1/(2p)}\left(\f_1^*\left(\frac{\sqrt{t}}{2}\right)+\f_2^*\left(\frac{\sqrt{t}}{2}\right)\right).
\end{equation}
Fix $t>0$. There exist a set $A$ of type $F_\s$ and a set $B$ of
type $G_\d$ such that $A\subset B,$ $\mes_2 A=t,$ $\mes_2 B=2t$,
and
$$
|f(x,y)|\ge f^*(t)\quad\mbox{for all}\quad (x,y)\in A,
$$
$$
|f(x,y)|\le f^*(2t)\quad\mbox{for all}\quad (x,y)\not\in B.
$$
At least one of the essential projections of the set $A$ onto the
coordinate axes has the one-dimensional measure not smaller than
$\sqrt{t}.$ Assume that the projection onto the $y-$axis has this
property, and denote this projection by $P.$ For any $y\in P$, set
$\b(y)=\mes_1 B(y).$ We have
$$
\int_P \b(y)\,dy \le \mes_2 B=2t.
$$
This implies that $\mes_1\{y\in P: \b(y)\ge 4\sqrt{t}\}\le
\sqrt{t}/2$.
 Hence, there exists a subset $Q\subset P$ of type $F_\s$ such that $\mes_1 Q\ge\sqrt{t}/2,$
\begin{equation}\label{b(y)}
\b(y)\le 4\sqrt{t}, \quad\mbox{and}\quad f_y\in \Lip \frac1p
\end{equation}
for any $y\in Q.$ Fix $y\in Q.$ Observe that the section $A(y)$
has a positive one-dimensional measure.  Further, there exists
$h\in  (0,8\sqrt{t}]$ such that
$$
\mes_1\{x\in A(y): x+h\not\in B(y)\}>0.
$$
Indeed, otherwise for any $h\in  (0,8\sqrt{t}]$ we would have that
$$
\chi_{B(y)}(x+h)=1 \quad\mbox{for almost all} \quad x\in A(y),
$$
where $\chi_{B(y)}$ is the characteristic function of the set
$B(y).$ Thus, for any $h\in  (0,8\sqrt{t}]$
$$
\int_{A(y)} \chi_{B(y)}(x+h)\,dx=\mes_1 A(y).
$$
Integrating this equality with respect to $h$ and interchanging
the order of integrations, we obtain
\begin{equation}\label{contr}
\int_{A(y)}\,dx
\int_0^{8\sqrt{t}}\chi_{B(y)}(x+h)\,dh=8\sqrt{t}\mes_1 A(y).
\end{equation}
But $y\in Q,$ and therefore, by the first  condition in
(\ref{b(y)})
$$
\int_0^{8\sqrt{t}}\chi_{B(y)}(x+h)\,dh\le \int_\R
\chi_{B(y)}(u)\,du=\b(y)\le 4\sqrt{t}.
$$
This implies that the left-hand side of (\ref{contr}) doesn't
exceed $4\sqrt{t}\mes_1 A(y),$ and we obtain a contradiction since
$\mes_1 A(y)>0.$ Thus,
 there exists $h\in (0,8\sqrt{t}]$ such that
$|f(x+h,y)|\le f^*(2t)$ for all $x$ from some subset $A'(y)\subset
A(y)$ with $\mes_1 A'(y)>0.$ In addition, $|f(x,y)|\ge f^*(t)$ for
all $x\in A(y).$ Thus, we have
$$
f^*(t)-f^*(2t)\le |f(x,y)-f(x+h,y)|\quad\mbox{for any}\quad x\in
A'(y).
$$
Since $\mes_1 A'(y)>0,$ this implies that
$$
f^*(t)-f^*(2t)\le \esssup\limits_{x\in R} |f(x,y)-f(x+h,y)|.
$$
Using also the second condition in (\ref{b(y)}),
we obtain that
$$
f^*(t)-f^*(2t)
\le h^{1/p}||f_y||^*_{\Lip \frac1p}\le (8\sqrt{t})^{1/p}\f_1(y)
$$
for any $y\in Q.$ Since $\mes_1 Q \ge\sqrt{t}/2,$ by the
definition of the non-increasing rearrangement we have that
$\inf_{y\in Q}\f_1(y)\le \f^*(\sqrt{t}/2).$ Hence,
$$
f^*(t)-f^*(2t)\le c
t^{1/(2p)}\f_1^*\left(\frac{\sqrt{t}}{2}\right).
$$
Similarly, in the case when the projection of $A$ onto the
$x-$axis has the one-dimensional measure at least $\sqrt{t},$ we
have the estimate
$$
f^*(t)-f^*(2t)\le c
t^{1/(2p)}\f_2^*\left(\frac{\sqrt{t}}{2}\right).
$$
Thus, we have proved inequality (\ref{3_1}). Using (\ref{3_1}),
(\ref{rear}), and (\ref{rear1}), we get
$$
\begin{aligned}
&||f||_\infty\le 2\int_0^\infty [f^*(t)-f^*(2t)]\frac{dt}{t}\\
&\le c \int_0^\infty
t^{1/(2p)}\left[\f_1^*\left(\frac{\sqrt{t}}{2}\right)
+\f_2^*\left(\frac{\sqrt{t}}{2}\right)\right]\frac{dt}{t}\\
&= c'\int_0^\infty
t^{1/p}\left[\f_1^*(t)+\f_2^*(t)\right]\frac{dt}{t}.
\end{aligned}
$$
This gives (\ref{bound}).

Set now
$$
f_h(x,y)=\frac1{h^2}\int_0^h\int_0^hf(x+u,y+v)\,du\,dv\quad (h>0)
$$
and
$$
g_h(x,y)=f(x,y)-f_h(x,y).
$$
We have (see (\ref{infty}))
$$
|f_h(x,y)|\le \min\left(\frac1h\int_0^h \psi_1(y+v)\,dv, \frac1h\int_0^h \psi_2(x+u)\,du\right).
$$
As above, this implies that $f_h\in S_0(\R^2)$ and thus $g_h\in S_0(\R^2)$ for any $h>0.$

We shall estimate $||g_h||_\infty.$ First,
$$
\begin{aligned}
|g_h(x,y)|&\le \frac1{h^2}\int_0^h\int_0^h|f(x+u,y+v)-f(x,y+v)|\,du\,dv\\
&+\frac1{h}\int_0^h |f(x,y+v)-f(x,y)|\,dv\\
&\le h^{1/p}\left[\frac1h\int_0^h\f_1(y+v)\,dv+\f_2(x)\right] \le
h^{1/p}\left[\f_1^{**}(h)+\f_2(x)\right].
\end{aligned}
$$
Similarly,
$$
|g_h(x,y)|\le h^{1/p}\left[\f_2^{**}(h)+\f_1(y)\right].
$$
There exists a set $E_h\subset \R^2$ of type $F_\s$ such that
$\mes_2 E_h\ge h^2$ and
$$
|g_h(x,y)|\ge g_h^*(h^2)\quad\mbox{for all}\quad (x,y)\in E_h.
$$
By the estimates obtained above,
\begin{equation}\label{new}
g_h^*(h^2)\le
h^{1/p}\left[\f_1^{**}(h)+\f_2^{**}(h)\right]+h^{1/p}\min\left(\f_1(y),\f_2(x)\right)
\end{equation}
for any $(x,y)\in E_h.$ At least one of the projections of $E_h$
onto the coordinate axes  has the one-dimensional measure not
smaller than $h$. If the projection $\Pi'(E_h)$ onto the $x-$axis
has this property, then
$$
\inf_{x\in \Pi'(E_h)} \f_2(x)\le \f_2^*(h).
$$
Similarly,
$$
\inf_{y\in \Pi''(E_h)} \f_1(y)\le \f_1^*(h)
$$
if $\mes_1 \Pi''(E_h)\ge h,$ where $\Pi''(E_h)$ is the projection
of $E_h$ onto the $y-$axis. Thus, using (\ref{new}), we obtain
\begin{equation}\label{3_3}
g_h^*(h^2)\le 2h^{1/p}[\f_1^{**}(h)+\f_2^{**}(h)].
\end{equation}
It follows from (\ref{rear}) that
\begin{equation}\label{3_4}
g_h^*(0+)-g_h^*(h^2)\le \frac{1}{\ln
2}\int_0^{h^2}[g_h^*(t)-g_h^*(2t)]\frac{dt}{t}.
\end{equation}

Further, we have the following estimates
\begin{equation}\label{3_100}
||f_h(\cdot,y)||^*_{\operatorname{Lip}\frac1p}\le \f_1^{**}(h)
\quad\mbox{and}\quad
||f_h(x,\cdot)||^*_{\operatorname{Lip}\frac1p}\le \f_2^{**}(h).
\end{equation}
Indeed, for any $\tau>0$
$$
\begin{aligned}
&|f_h(x+\tau,y)-f_h(x,y)|\\
&\le \frac1{h^2}\int_0^h\int_0^h|f(x+u+\tau,y+v)-f(x+u,y+v)|\,du\,dv\\
&\le \frac{\tau^{1/p}}{h}\int_0^h\f_1(y+v)\,dv\le
\tau^{1/p}\f_1^{**}(h).
\end{aligned}
$$
This implies the first inequality in (\ref{3_100}); the second
inequality is obtained similarly. Applying (\ref{3_100}), we get
$$
\begin{aligned}
||g_h(\cdot,y)||^*_{\operatorname{Lip}\frac1p}&\le
||f(\cdot,y)||^*_{\operatorname{Lip}\frac1p}+||f_h(\cdot,y)||^*_{\operatorname{Lip}\frac1p}\\
&\le \f_1(y)+\f_1^{**}(h),
\end{aligned}
$$
and similarly
$$
||g_h(x,\cdot)||^*_{\operatorname{Lip}\frac1p}\le
\f_2(x)+\f_2^{**}(h).
$$
Using these estimates and applying the same reasonings as in the
proof of
 (\ref{3_1}), we have
$$
g_h^*(t)-g_h^*(2t)\le
ct^{1/(2p)}\left[\f_1^*\left(\frac{\sqrt{t}}{2}\right)+\f_2^*\left(\frac{\sqrt{t}}{2}\right)+\f_1^{**}(h)+\f_2^{**}(h)\right].
$$
This inequality, (\ref{3_3}), and (\ref{3_4}) yield that
$$
\begin{aligned}
||g_h||_\infty &=g_h^*(0+)\le 2h^{1/p}[\f_1^{**}(h)+\f_2^{**}(h)]\\
&+c[\f_1^{**}(h)+\f_2^{**}(h)]\int_0^{h^2}t^{1/(2p)-1}dt\\
&+c\int_0^{h^2}t^{1/(2p)}\left[\f_1^*\left(\frac{\sqrt{t}}{2}\right)+\f_2^*\left(\frac{\sqrt{t}}{2}\right)\right]\frac{dt}{t}\\
&\le c'\left(h^{1/p}[\f_1^{**}(h)+\f_2^{**}(h)]+
\int_0^h t^{1/p}[\f_1^*(t)+\f_2^*(t)]\frac{dt}{t}\right)\\
&\le c''\int_0^ht^{1/p}[\f_1^*(t)+\f_2^*(t)]\frac{dt}{t}.
\end{aligned}
$$
It follows that $||g_h||_\infty\to 0$ as $h\to 0.$ Thus,
$f_h(x,y)$ converges uniformly on $\R^2$ as $h\to 0$, and the
limit function $\bar{f}$ is continuous on $\R^2.$ By the Lebesgue differentiation theorem, $f=\bar{f}$ almost everywhere. Further,
$$
\o(\bar{f};h)\le ||g_h||_\infty+\o(f_h;h).
$$
Applying (\ref{3_100}), we easily get that
$$
\o(f_h;h)\le h^{1/p}\left[\f_1^{**}(h)+\f_2^{**}(h)\right].
$$
Using this inequality and estimate of $||g_h||_\infty$ obtained
above, we have
$$
\o(\bar{f};h)\le
c\int_0^ht^{1/p}[\f_1^*(t)+\f_2^*(t)]\frac{dt}{t}.
$$
The proof is completed.
\end{proof}

In Theorem \ref{Second} the exterior $L^{p,1}-$norm cannot be
replaced by a weaker Lorentz norm. More exactly, the following
statement holds.
\begin{prop}\label{Optimality} Let $1< p<\infty$ and $1<q<\infty.$ Then there exists a function
\begin{equation}\label{optim1}
f\in
L^{p,q}\left[\operatorname{Lip}\frac1p\right]_{\operatorname{sym}}
\end{equation}
such that $f\not\in L^\infty(\R^2).$
\end{prop}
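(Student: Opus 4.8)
The plan is to exhibit $f$ explicitly, with a logarithmic singularity at the origin. Fix a number $\gamma$ with $0<\gamma<1-1/q$ (possible because $q>1$) and a $C^\infty$ cutoff $\chi\colon[0,\infty)\to[0,1]$ with $\chi\equiv1$ on $[0,\frac12]$ and $\chi\equiv0$ on $[1,\infty)$, and set
$$v(s)=\chi(s)\bigl(\log(2/s)\bigr)^{\gamma}\quad(s>0),\qquad f(x,y)=v\bigl(\max(|x|,|y|)\bigr).$$
Then $f$ is supported in $[-1,1]^2$, is continuous off the origin, and $f(x,x)=(\log(2/|x|))^{\gamma}\to\infty$ as $x\to0$; hence $f\in S_0(\R^2)$ but $f\notin L^{\infty}(\R^2)$. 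Since $f(x,y)=f(y,x)$, in order to prove $f\in L^{p,q}[\operatorname{Lip}\frac1p]_{\operatorname{sym}}$ it is enough to show that $\mathcal{N}^{(2)}_{1/p}f\in L^{p,q}(\R)$, where $\mathcal{N}^{(2)}_{1/p}f(x)=\|f_x\|_{\operatorname{Lip}\frac1p}$.

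The heart of the matter is the pointwise bound
$$\mathcal{N}^{(2)}_{1/p}f(x)\le c\,|x|^{-1/p}\bigl(\log(2/|x|)\bigr)^{\gamma-1}\quad(0<|x|\le\tfrac12),\qquad\mathcal{N}^{(2)}_{1/p}f(x)\le c\quad(\tfrac12\le|x|\le1).$$
For fixed small $x$ the section $f_x(y)=v(\max(|x|,|y|))$ is even in $y$, non-increasing in $|y|$, equal to $v(|x|)=(\log(2/|x|))^{\gamma}$ on $[-|x|,|x|]$, and zero for $|y|\ge1$. Using that $f_x$ is non-increasing in $|y|$ one checks the inequality $\|\Delta_h f_x\|_\infty\le\sup_{s\ge|x|}\bigl(v(s)-v(s+h)\bigr)$; writing $v(s)-v(s+h)=\int_s^{s+h}|v'|$ and splitting the integral at $s=\tfrac12$ — below which $|v'(u)|=\gamma(\log(2/u))^{\gamma-1}/u$ is decreasing, above which $|v'|$ is merely bounded — this gives
$$\|\Delta_h f_x\|_\infty\ \le\ \Bigl[(\log(2/|x|))^{\gamma}-(\log(2/(|x|+h)))^{\gamma}\Bigr]_+\ +\ c\,h .$$
Combined with the trivial bound $\|\Delta_h f_x\|_\infty\le 2v(|x|)$, optimizing $h^{-1/p}\|\Delta_h f_x\|_\infty$ over $h>0$ now yields $\|f_x\|^*_{\operatorname{Lip}\frac1p}\le c\,|x|^{-1/p}(\log(2/|x|))^{\gamma-1}$: the supremum is attained at $h\asymp|x|$, where the bracket is $\asymp(\log(2/|x|))^{\gamma-1}$, while the term $c\,h$ contributes only $O\bigl((\log(2/|x|))^{\gamma(1-1/p)}\bigr)$. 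Adding $\|f_x\|_\infty=(\log(2/|x|))^{\gamma}$, which is of lower order because $|x|^{-1/p}\gg\log(2/|x|)$, proves the displayed bound for small $x$; the bounded range is immediate. I expect this optimization to be the main obstacle. Indeed, the crude estimate $\|\Delta_h f_x\|_\infty\le\min\{c(\log(2/|x|))^{\gamma-1}h/|x|,\ 2v(|x|)\}$, coming from the global Lipschitz constant $\asymp(\log(2/|x|))^{\gamma-1}/|x|$ of $f_x$, only yields the weaker exponent $\gamma-1/p$; and since $\gamma<1/p-1/q$ is vacuous when $p\ge q$, one genuinely has to use the finer bound $\|\Delta_h f_x\|_\infty\lesssim v(|x|)-v(|x|+h)$ and carry out the optimization in $h$.

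Finally, $\mathcal{N}^{(2)}_{1/p}f$ is dominated by a non-increasing function of $|x|$, supported in $[-1,1]$ and comparable near $0$ to $|x|^{-1/p}(\log(2/|x|))^{\gamma-1}$, so its non-increasing rearrangement satisfies $(\mathcal{N}^{(2)}_{1/p}f)^{*}(t)\le c\,t^{-1/p}(\log(2/t))^{\gamma-1}$ for small $t$ and $(\mathcal{N}^{(2)}_{1/p}f)^{*}(t)\le c$ for $t\le1$. Hence
$$\|\mathcal{N}^{(2)}_{1/p}f\|_{p,q}^{q}\ \le\ c\int_{0}^{1/2}\bigl(\log(2/t)\bigr)^{(\gamma-1)q}\,\frac{dt}{t}\ +\ c\int_{0}^{1}t^{q/p-1}\,dt\ =\ c\int_{\log 4}^{\infty}s^{(\gamma-1)q}\,ds\ +\ c',$$
which is finite precisely because $(\gamma-1)q<-1$, i.e.\ $\gamma<1-1/q$. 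By symmetry $\mathcal{N}^{(1)}_{1/p}f\in L^{p,q}(\R)$ as well, so $f\in L^{p,q}[\operatorname{Lip}\frac1p]_{\operatorname{sym}}$ while $f\notin L^{\infty}(\R^2)$, which is the assertion of the proposition. (For $q=1$ no such $f$ exists by Theorem \ref{Second}, so the condition $\gamma<1-1/q$ is exactly what makes the two cases different.)
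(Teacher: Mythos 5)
Your construction is correct and follows essentially the same route as the paper: there one takes $f(x,y)=\left|\ln\frac{4}{|x|+|y|}\right|^\beta\f(|x|+|y|)$ with $0<\beta<1-1/q$, proves the same pointwise bound $\|f_x\|_{\operatorname{Lip}\frac1p}\lesssim x^{-1/p}\left(\ln\frac2x\right)^{\beta-1}$ by the same two-regime optimization in $h$ (the regime $h\ge x$ handled via $1-(1-\tau)^\beta\le c\tau$), and concludes with the same convergence criterion $(\beta-1)q<-1$. Your choices of $\max(|x|,|y|)$ in place of $|x|+|y|$ and of the monotonicity of $|v'|$ in place of the paper's concavity reduction are immaterial variations; just write out the $h\gtrsim|x|$ half of the optimization explicitly (bracket $\lesssim\left(\log\frac{2}{|x|}\right)^{\gamma-1}\log\left(1+\frac{h}{|x|}\right)$ together with boundedness of $u^{-1/p}\log(1+u)$), which is exactly the computation carried out in the paper.
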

\begin{proof} Choose $0<\beta<1-1/q$. Let
$$
g(x,y)=\left|\ln\frac{4}{|x|+|y|}\right|^\beta\quad\mbox{if}\quad
(x,y)\not=(0,0),\,\,g(0,0)=0.
$$
Further, let $\f\in C_0^\infty(\R)$, $\f(t)=1$ if $|t|\le 1/2,$
and $\f(t)=0$ if $|t|\ge 1$. Set
$$
f(x,y)=g(x,y)\f(|x|+|y|).
$$
Then $f\not\in L^\infty(\R^2).$ We shall prove that
\begin{equation}\label{optim2}
f\in L^{p,q}_x\left[\left(\operatorname{Lip}
\frac1p\right)_y\right].
\end{equation}
Denote for $h\in (0,1]$
$$
\psi(x,h)=\sup_{0<y\le 1}h^{-1/p}|g(x,y+h)-g(x,y)|\chi_{(0,1]}(x).
$$
It is easy to see that (\ref{optim2}) will be proved if we show
that the function
$$
\psi(x)=\sup_{0<h\le
1}\psi(x,h)\chi_{(0,1]}(x)
$$
 belongs to $L^{p,q}(\R).$

Fix $x\in (0,1]$. Since the function $g(x,y)$ is concave with
respect to $y$ on the interval $[0,2],$ we have that
\begin{equation}\label{optim3}
\psi(x,h)=h^{-1/p}\left[\left(\ln\frac4x\right)^\beta-\left(\ln\frac{4}{x+h}\right)^\beta\right],
\,\, h\in(0,1].
\end{equation}
If $0<h\le x,$ then
\begin{equation}\label{optim4}
\psi(x,h)\le
\frac{4\b}{x}h^{1-1/p}\left(\ln\frac2x\right)^{\beta-1}\le 4\b
x^{-1/p}\left(\ln\frac2x\right)^{\beta-1}.
\end{equation}
Let now $x\le h\le 1.$ Set $u=h/x;$ then $1\le u\le 1/x.$ By
(\ref{optim3}), we have
$$
\begin{aligned}
&\psi(x,h)\le (xu)^{-1/p}\left[\left(\ln\frac4x\right)^\beta-\left(\ln\frac{4}{x(1+u)}\right)^\beta\right]\\
&=(xu)^{-1/p}(z^\b-(z-\ln(1+u))^\b)= x^{-1/p}z^\b
u^{-1/p}\left[1-\left(1-\frac{\ln(1+u)}{z}\right)^\b\right],
\end{aligned}
$$
where
$$
z=\ln\frac4x, \quad z>\ln(1+u).
$$
Since
 $$
 1-(1-\tau)^\b\le c \tau     \quad (0<\b<1,\,0< \tau<1),
 $$
 where $c$ is a  constant depending only on $\b,$ we obtain
 $$
u^{-1/p}\left[1-\left(1-\frac{\ln(1+u)}{z}\right)^\b\right]\le
\frac{cu^{-1/p}\ln(1+u)}{z}\le \frac{c'}{z}.
$$
Using this estimate and taking into account (\ref{optim4}), we
have that  for all $0<h\le 1$
$$
\psi(x)=\sup_{0<h\le 1}\psi(x,h)\chi_{(0,1]}(x)\le
c x^{-1/p}\left(\ln\frac2x\right)^{\beta-1}.
$$
It follows that $\psi\in L^{p,q}(\R)$, and we obtain
(\ref{optim2}). Since $f(x,y)=f(y,x),$ this implies
(\ref{optim1}).

\end{proof}

\section{Embeddings}

For a function $\f\in L^\infty(\R),$ denote
$$
\o(f;t)=\sup_{0\le h\le t}||\Delta_h\f||_\infty.
$$
If, in addition, $\f\in S_0(\R),$ then for any $t>0$
\begin{equation}\label{L_infty}
||\f||_\infty\le \f^*(t)+2\o(\f;t).
\end{equation}
Indeed, for any $\e>0$ the set
$$
E_\e=\{x\in
\R:|\f(x)|>||\f||_\infty-\e\}
$$
has a positive measure. By the definition of the non-increasing
\newline rearrangement, for any $x\in E_\e$ there exists $h\in (0,2t)$
such that $$|\f(x+h)|\le \f^*(t).$$ Thus,
$$
|\f(x)|\le |\f(x)-\f(x+h)| + \f^*(t)\le \o(\f;2t)+ \f^*(t).
$$
This implies (\ref{L_infty}).

\begin{teo}\label{First} Let $1\le p<q<\infty.$ Then
$\ \mathcal U_p(\R^2)\subset \mathcal U_q(\R^2).$ Moreover, for
any $f\in \mathcal U_p(\R^2)$
\begin{equation}\label{first}
||f||_{\mathcal U_q(\R^2)}\le c||f||_{\mathcal U_p(\R^2)}.
\end{equation}
\end{teo}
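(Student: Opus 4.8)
The plan is to reduce the inclusion $\mathcal U_p\subset\mathcal U_q$ to a pointwise inequality for the Lipschitz norms of the sections, and then to turn that inequality into an $L^{q,1}$-bound by combining the boundedness of $f$ supplied by Theorem~\ref{Second} with a splitting of a layer-cake integral at the level $\lambda=||f||_\infty$.

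Let $f\in\mathcal U_p(\R^2)$ and put $A=||f||_{\mathcal U_p(\R^2)}$, $\psi_1(y)=||f_y||_\infty$, $N_1=\mathcal{N}_{1/p}^{(1)}f$. Then $\psi_1\le N_1$ pointwise, $||N_1||_{p,1}\le A$, and $||\psi_1||_{p,1}\le||N_1||_{p,1}\le A$; moreover, by (\ref{bound}) of Theorem~\ref{Second}, after the modification on a null set one has $||f||_\infty\le M:=cA$, so that $\psi_1(y)\le M$ for a.e.\ $y$. The first step is the pointwise estimate
$$
\mathcal{N}_{1/q}^{(1)}f(y)\le c\,\psi_1(y)^{1-p/q}N_1(y)^{p/q}\qquad\text{for a.e. }y .
$$
It follows from $||\Delta_h f_y||_\infty\le\min\bigl(2\psi_1(y),\,h^{1/p}||f_y||^*_{\operatorname{Lip}(1/p)}\bigr)$: dividing by $h^{1/q}$ and taking the supremum over $h>0$, an elementary one-variable optimisation (legitimate because $1/p>1/q$) gives $||f_y||^*_{\operatorname{Lip}(1/q)}\le c\,\psi_1(y)^{1-p/q}\bigl(||f_y||^*_{\operatorname{Lip}(1/p)}\bigr)^{p/q}$; since also $\psi_1=\psi_1^{1-p/q}\psi_1^{p/q}\le\psi_1^{1-p/q}N_1^{p/q}$ and $||f_y||^*_{\operatorname{Lip}(1/p)}\le N_1(y)$, both terms of $\mathcal{N}_{1/q}^{(1)}f(y)=||f_y||_\infty+||f_y||^*_{\operatorname{Lip}(1/q)}$ are dominated by $\psi_1^{1-p/q}N_1^{p/q}$.

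The second step estimates $||\psi_1^{1-p/q}N_1^{p/q}||_{q,1}$. Writing $||g||_{q,1}=q\int_0^\infty |\{g>\lambda\}|^{1/q}\,d\lambda$, I split the integral at $\lambda=M$. For $0<\lambda<M$, since $\psi_1\le N_1$ one has $\{\psi_1^{1-p/q}N_1^{p/q}>\lambda\}\subset\{N_1>\lambda\}$, and the weak-type bound $|\{N_1>\lambda\}|\le(A/(p\lambda))^{p}$ (valid because $||N_1||_{p,\infty}\le\frac1p||N_1||_{p,1}\le\frac1p A$) together with $p/q<1$ gives $q\int_0^M|\{N_1>\lambda\}|^{1/q}\,d\lambda\le cA^{p/q}M^{1-p/q}\le cA$. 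For $\lambda>M$, since $\psi_1\le M$ one has $\{\psi_1^{1-p/q}N_1^{p/q}>\lambda\}\subset\{N_1>\lambda^{q/p}M^{1-q/p}\}$; the substitution $\mu=\lambda^{q/p}M^{1-q/p}$ turns the corresponding integral into $pM^{1-p/q}\int_M^\infty|\{N_1>\mu\}|^{1/q}\mu^{p/q-1}\,d\mu$, and writing $|\{N_1>\mu\}|^{1/q}=|\{N_1>\mu\}|^{1/p}\,|\{N_1>\mu\}|^{1/q-1/p}$ with the second factor $\le(p\mu/A)^{1-p/q}$ reduces it to $c\,(M/A)^{1-p/q}\int_0^\infty|\{N_1>\mu\}|^{1/p}\,d\mu=c\,(M/A)^{1-p/q}\cdot\frac1p||N_1||_{p,1}\le cA$. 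Hence $||\mathcal{N}_{1/q}^{(1)}f||_{q,1}\le cA$, and since $\mathcal U_p$ and $\mathcal U_q$ are invariant under interchanging the coordinates the same bound holds for $\mathcal{N}_{1/q}^{(2)}f$; adding the two estimates yields (\ref{first}) and the inclusion.

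The crux is the second step. The pointwise inequality by itself only gives $\mathcal{N}_{1/q}^{(1)}f\le cN_1\in L^{p,1}$, which is useless since $L^{p,1}\not\subset L^{q,1}$; and using $\psi_1\le M$ alone yields $\mathcal{N}_{1/q}^{(1)}f\le cM^{1-p/q}N_1^{p/q}\in L^{q,q/p}$, still weaker than the required $L^{q,1}$ because $q/p>1$. It is precisely the simultaneous use of the uniform bound $||f||_\infty\le M$ coming from Theorem~\ref{Second} and of the decay encoded in $\psi_1,N_1\in L^{p,1}$, organised through the cut at $\lambda=M$, that produces the sharp $L^{q,1}$-norm; checking that the powers of $M$ and $A$ cancel correctly in each of the two ranges is the delicate bookkeeping.
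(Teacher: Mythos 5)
Your first step (the pointwise interpolation $\mathcal N^{(1)}_{1/q}f(y)\le c\,\psi_1(y)^{1-p/q}N_1(y)^{p/q}$, and likewise the reduction of the sup-norm part) is correct, but the second step breaks down at its decisive point, and the breakdown is not just a slip. For $\mu>M$ you factor $|\{N_1>\mu\}|^{1/q}=|\{N_1>\mu\}|^{1/p}\,|\{N_1>\mu\}|^{1/q-1/p}$ and claim the second factor is $\le(p\mu/A)^{1-p/q}$. Since $1/q-1/p<0$, the weak-type bound $|\{N_1>\mu\}|\le(A/(p\mu))^p$ gives exactly the reverse, $|\{N_1>\mu\}|^{1/q-1/p}\ge(p\mu/A)^{1-p/q}$; an upper bound on that negative power would require a \emph{lower} bound on the distribution function, which you do not have. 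Worse, no rearrangement of the same data can close the gap: the three facts you retain ($\psi_1\le N_1$, $\psi_1\le M=cA$ a.e., $\|N_1\|_{p,1}\le A$) do not imply $\|\psi_1^{1-p/q}N_1^{p/q}\|_{q,1}\le CA$. Take, for instance, $N_1$ with rearrangement $N_1^*(t)=B\,t^{-1/p}\bigl(\ln(eT/t)\bigr)^{-\gamma}$ on $(0,T)$ and $0$ beyond, with $1<\gamma\le q/p$ and $T$ large, and $\psi_1=\min(N_1,M)$. Then $\|N_1\|_{p,1}=B/(\gamma-1)<\infty$, but on the set where $N_1>M$ the majorant equals $M^{1-p/q}N_1^{p/q}$ and its contribution to the $L^{q,1}$ norm is $M^{1-p/q}B^{p/q}\int_0^{t_0}t^{-1}\bigl(\ln(eT/t)\bigr)^{-\gamma p/q}\,dt=\infty$, because $\gamma p/q\le1$. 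So the $\lambda>M$ part of your split can be genuinely infinite (and H\"older applied there fails at the same logarithmic borderline): your proof passes through a quantity that the hypotheses simply do not control.

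The missing ingredient is the interaction between the two families of sections. Treating each $y$-section separately, as your pointwise bound does, throws away the fact that $\f_{q,1}(y)=\sup_{h>0}h^{-1/q}\|\Delta_h f_y\|_\infty$ cannot oscillate wildly from one $y$ to a nearby one, because the $x$-sections are Lipschitz as well. This is precisely what the paper exploits: for $h\le t$ it argues as you do, but for $h>t$ it writes $\|\Delta_h f_y\|_\infty\le(\Delta_hf_y)^*(t)+2\f_{p,1}(y)t^{1/p}$ via (\ref{L_infty}) and then estimates $(\Delta_hf_y)^*(t)$ by shifting to a nearby section $y+\tau$, $\tau\in(0,4t]$, chosen so that $\f_{q,1}(y+\tau)\le\f_{q,1}^*(2t)$, at a cost $2\f_{p,2}^*(t/2)\tau^{1/p}$ governed by the Lipschitz norms in the \emph{other} variable. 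This produces the difference inequality $\f_{q,1}^*(t)-\f_{q,1}^*(2t)\le 4\bigl[\f_{p,1}^*(t)+2\f_{p,2}^*(t/2)\bigr]t^{1/p-1/q}$, which is then summed to an $L^{q,1}$ bound using (\ref{rear}). Without some such cross-variable transfer, the one-dimensional interpolation inequality plus the $L^\infty$ bound from Theorem \ref{Second} is provably insufficient, so the argument needs to be rebuilt along these lines rather than repaired locally.
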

\begin{proof} For any $r\ge 1,$ denote
$$
\f_{r,1}(y)= \sup_{h>0}h^{-1/r}||\Delta_h f_y||_\infty
\quad\mbox{and}\quad \f_{r,2}(x)= \sup_{h>0}h^{-1/r}||\Delta_h
f_x||_\infty.
$$
We estimate $\f_{q,1}^*(t).$ First, let $0<h\le t.$ Then
$$
|f(x+h,y)-f(x,y)|\le\f_{p,1}(y)h^{1/p} \le
\f_{p,1}(y)h^{1/q}t^{1/p-1/q}.
$$
Thus,
\begin{equation}\label{1}
\sup_{0<h\le t}h^{-1/q}||\Delta_h f_y||_\infty\le
\f_{p,1}(y)t^{1/p-1/q}.
\end{equation}
In particular, we have that
$$
\sup_{0<h\le 1} h^{-1/q}||\Delta_h f_y||_\infty\le \f_{p,1}(y).
$$
On the other hand,
$$
\sup_{h\ge 1}h^{-1/q}||\Delta_h f_y||_\infty\le 2||f_y||_\infty.
$$
Thus,
$$
\f_{q,1}(y)\le 2||f_y||_{\operatorname{Lip}\frac1p}.
$$
Since the function on the right-hand side belongs to
$L^{p,1}(\R),$ we have that $\f_{q,1}\in S_0(\R).$

Let now $h>t.$  For any fixed $y\in\R,$ we have, applying
(\ref{L_infty})
$$
||\Delta_h f_y||_\infty\le (\Delta_hf_y)^*(t)+2\o(f_y;t)
$$
\begin{equation}\label{h>t}
\le
(\Delta_hf_y)^*(t)+2\f_{p,1}(y)t^{1/p}.
\end{equation}
We shall estimate the first term on the right-hand side. For any
$\tau>0$
$$
\begin{aligned}
\Delta_h f_y(x) &=|f(x+h,y)-f(x,y)|\le |f(x+h,y)-f(x+h,y+\tau)|\\
&+ |f(x,y)-f(x,y+\tau)|+|f(x+h,y+\tau)-f(x,y+\tau)|\\
&\le \left[\f_{p,2}(x+h) + \f_{p,2}(x)\right]\tau^{1/p}+
\f_{q,1}(y+\tau)h^{1/q}.
\end{aligned}
$$
Thus,
$$
(\Delta_hf_y)^*(t)\le 2\f_{p,2}^*(t/2)\tau^{1/p}+
\f_{q,1}(y+\tau)h^{1/q}\quad(\tau>0).
$$
For any fixed $y$ there exists $\tau\in(0,4t]$ such that
$\f_{q,1}(y+\tau)\le \f_{q,1}^*(2t).$ Taking this $\tau,$ we
obtain
$$
(\Delta_hf_y)^*(t)\le
8\f_{p,2}^*(t/2)t^{1/p}+\f_{q,1}^*(2t)h^{1/q}.
$$
From here and (\ref{h>t}),
$$
\sup_{h\ge t}h^{-1/q}||\Delta_h f_y||_\infty \le
2\left[\f_{p,1}(y)+4\f_{p,2}^*(t/2)\right]t^{1/p-1/q}+\f_{q,1}^*(2t).
$$
This inequality and (\ref{1}) imply that
$$
\f_{q,1}(y) \le 4\left[\f_{p,1}(y)+2\f_{p,2}^*(t/2)\right]t^{1/p-1/q}+\f_{q,1}^*(2t)
$$
and therefore
$$
\f_{q,1}^*(t)-\f_{q,1}^*(2t) \le
4\left[\f_{p,1}^*(t)+2\f_{p,2}^*(t/2)\right]t^{1/p-1/q}.
$$
Thus,
$$
\int_0^\infty\left[\f_{q,1}^*(t)-\f_{q,1}^*(2t)\right]
t^{1/q-1}\,dt
$$
\begin{equation}\label{2}
\le
4\int_0^\infty\f_{p,1}^*(t)t^{1/p-1}\,dt+16\int_0^\infty\f_{p,2}^*(t)t^{1/p-1}\,dt.
\end{equation}
Since $\f_{q,1}\in S_0(\R),$ we have by (\ref{rear})
$$
\begin{aligned}
\int_0^\infty \f_{q,1}^*(t)t^{1/q-1}\,dt&\le 2\int_0^\infty
t^{1/q-1}
\int_{t/2}^\infty\left[\f_{q,1}^*(u)-\f_{q,1}^*(2u)\right]\frac{du}{u}\,dt\\
&\le
2^{1+1/q}q\int_0^\infty\left[\f_{q,1}^*(t)-\f_{q,1}^*(2t)\right]
t^{1/q-1}\,dt.
\end{aligned}
$$
Together with (\ref{2}), this implies that
$$
||\f_{q,1}||_{L^{q,1}}\le 2^6
q\left(||\f_{p,1}||_{L^{p,1}}+||\f_{p,2}||_{L^{p,1}}\right).
$$
Clearly, a similar estimate holds for $||\f_{q,2}||_{L^{q,1}}.$
Thus, we have
\begin{equation}\label{lipschitz}
||\f_{q,1}||_{L^{q,1}}+||\f_{q,2}||_{L^{q,1}}\le
2^7q\left(||\f_{p,1}||_{L^{p,1}}+||\f_{p,2}||_{L^{p,1}}\right).
\end{equation}

Further, let
$$
 \psi_1(y)= ||f_y||_\infty, \quad \psi_2(x)=
||f_x||_{\infty}.
$$
Then
$$
\int_1^\infty t^{1/q-1}[\psi^*_1(t)+\psi_2^*(t)]\,dt\le
\int_1^\infty t^{1/p-1}[\psi^*_1(t)+\psi_2^*(t)]\,dt
$$
and, by (\ref{bound}),
$$
\int_0^1  t^{1/q-1}[\psi^*_1(t)+\psi_2^*(t)]\,dt\le
2q||f||_\infty\le cq \left(||\f_1||_{p,1}+||\f_2||_{p,1}\right).
$$
These estimates together with (\ref{lipschitz}) imply
(\ref{first}).
\end{proof}

Finally, we  prove embedding (\ref{embed_1}).
\begin{prop}\label{EMBED} For any function $f\in W_1^{2,2}(\R^2)$
\begin{equation} \label{final1}
\int_\R||f_x||_{\operatorname{Lip} 1}^*\,dx\le
\frac12||D_{2}^2f||_1, \,\, \int_\R||f_y||_{\operatorname{Lip}
1}^*\,dy\le \frac12||D_{1}^2f||_1,
\end{equation}
and
\begin{equation}\label{final2}
||f||_{\mathcal U_1(\R^2)}\le
c||f||_1^{1/2}(||D_{1}^2f||_1^{1/2}+||D_{2}^2f||_1^{1/2}).
\end{equation}
\end{prop}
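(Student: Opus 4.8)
The plan is to reduce both inequalities to two elementary one-variable facts and then integrate section by section. First I record the facts. Let $g\in C^{2}(\R)$ with $g,g',g''\in L^{1}(\R)$; then $g'$ is bounded and uniformly continuous, so $g$ and $g'$ tend to $0$ at $\pm\infty$, and writing $g'(x)=\int_{-\infty}^{x}g''=-\int_{x}^{\infty}g''$ and averaging the two representations yields $|g'(x)|\le\tfrac12\|g''\|_{1}$, that is,
\[
\|g\|^{*}_{\operatorname{Lip}1}=\|g'\|_{\infty}\le\tfrac12\|g''\|_{1}.
\]
Integrating $(g^{2})'=2gg'$ from $-\infty$ gives $g(x)^{2}=2\int_{-\infty}^{x}gg'$, hence
\[
\|g\|_{\infty}^{2}\le 2\|g'\|_{\infty}\|g\|_{1}\le\|g''\|_{1}\|g\|_{1}.
\]
A standard mollification then reduces the proposition to $f\in C^{\infty}(\R^{2})$ with $f$ and all its partial derivatives in $L^{1}(\R^{2})$: I would prove the estimates for $f*\rho_{\varepsilon}$, use $\|D_{i}^{2}(f*\rho_{\varepsilon})\|_{1}\le\|D_{i}^{2}f\|_{1}$ and $\|f*\rho_{\varepsilon}\|_{1}\le\|f\|_{1}$, pass to a subsequence converging a.e.\ on $\R^{2}$, and then use Fubini's theorem to transfer a.e.\ convergence to a.e.\ sections, the lower semicontinuity of $\|\cdot\|^{*}_{\operatorname{Lip}1}$ and $\|\cdot\|_{\infty}$ under a.e.\ convergence, and Fatou's lemma.

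For (\ref{final1}) I would apply the first fact with $g=f_{x}$, whose second derivative is $D_{2}^{2}f(x,\cdot)$, getting $\|f_{x}\|^{*}_{\operatorname{Lip}1}\le\tfrac12\int_{\R}|D_{2}^{2}f(x,y)|\,dy$; integrating in $x$ and using Tonelli's theorem gives $\int_{\R}\|f_{x}\|^{*}_{\operatorname{Lip}1}\,dx\le\tfrac12\|D_{2}^{2}f\|_{1}$, and exchanging the coordinates gives $\int_{\R}\|f_{y}\|^{*}_{\operatorname{Lip}1}\,dy\le\tfrac12\|D_{1}^{2}f\|_{1}$. These are precisely the two inequalities in (\ref{final1}).

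For (\ref{final2}), since $\|\cdot\|_{1,1}=\|\cdot\|_{1}$ one has $\|f\|_{\mathcal U_{1}(\R^{2})}=\int_{\R}\|f_{y}\|_{\operatorname{Lip}1}\,dy+\int_{\R}\|f_{x}\|_{\operatorname{Lip}1}\,dx$ with $\|\varphi\|_{\operatorname{Lip}1}=\|\varphi\|_{\infty}+\|\varphi\|^{*}_{\operatorname{Lip}1}$; the seminorm contributions are already controlled by (\ref{final1}). For the sup contributions, the second fact with $g=f_{y}$ gives $\|f_{y}\|_{\infty}\le\bigl(\int_{\R}|f(x,y)|\,dx\bigr)^{1/2}\bigl(\int_{\R}|D_{1}^{2}f(x,y)|\,dx\bigr)^{1/2}$; integrating in $y$ and applying the Cauchy--Schwarz inequality gives $\int_{\R}\|f_{y}\|_{\infty}\,dy\le\|f\|_{1}^{1/2}\|D_{1}^{2}f\|_{1}^{1/2}$, and symmetrically $\int_{\R}\|f_{x}\|_{\infty}\,dx\le\|f\|_{1}^{1/2}\|D_{2}^{2}f\|_{1}^{1/2}$. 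Collecting the four estimates gives (\ref{final2}).

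The one-variable facts and the Tonelli/Cauchy--Schwarz bookkeeping are routine; the step I expect to be the main obstacle is the reduction to sections. Because $W_{1}^{2,2}(\R^{2})$ controls neither the mixed second derivative nor the first-order derivatives, one cannot simply declare that almost every section is a $C^{1}$ function vanishing at infinity, so the passage through $f*\rho_{\varepsilon}$ — and in particular the justification that the section-wise estimates are preserved as $\varepsilon\to 0$ — is where the real work lies.
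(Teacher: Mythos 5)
Your proof is correct, and it takes a partly different route from the paper's. For the seminorm estimates (\ref{final1}) the two arguments agree in substance: the paper writes, for almost every $x$, $\|f_x\|^*_{\operatorname{Lip}1}\le\|D_2f(x,\cdot)\|_\infty\le\frac12\int_\R|D_2^2f(x,y)|\,dy$ and integrates, which is exactly your first one-variable fact applied sectionwise. The difference lies in the $L^\infty$ part and in the regularization. The paper invokes the Gagliardo--Nirenberg inequalities to obtain the first-order weak derivatives $D_1f,D_2f\in L^1(\R^2)$ together with $\|D_if\|_1\le c\|f\|_1^{1/2}\|D_i^2f\|_1^{1/2}$, and then uses $\|f_x\|_\infty\le\int_\R|D_2f(x,y)|\,dy$; you avoid Gagliardo--Nirenberg entirely, replacing it by the elementary one-dimensional interpolation $\|g\|_\infty^2\le\|g\|_1\|g''\|_1$ on sections followed by Cauchy--Schwarz in the outer variable, and you make the reduction to smooth functions explicit via mollification, Fubini to pass a.e.\ convergence to a.e.\ sections, lower semicontinuity of $\|\cdot\|_\infty$ and $\|\cdot\|^*_{\operatorname{Lip}1}$ under a.e.\ convergence, and Fatou. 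The step you worried about is in fact sound: since $D_2(f*\rho_\varepsilon)=f*D_2\rho_\varepsilon\in L^1(\R^2)$ and $D_2^2(f*\rho_\varepsilon)=(D_2^2f)*\rho_\varepsilon\in L^1(\R^2)$, almost every section of the mollified function satisfies your hypotheses $g,g',g''\in L^1$, and the contraction of the $L^1$ norms under mollification closes the limit. What each approach buys: yours is self-contained and elementary (and gives constant $1$ in the interpolated bound), at the cost of the approximation bookkeeping; the paper's is shorter because Gagliardo--Nirenberg supplies at once the existence of $D_1f,D_2f\in L^1$ and the multiplicative inequality, which also makes the sectionwise identities immediate. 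One shared caveat: collecting your four estimates (exactly as the paper's last line does) yields $\|f\|_{\mathcal U_1}\le\frac12\bigl(\|D_1^2f\|_1+\|D_2^2f\|_1\bigr)+\|f\|_1^{1/2}\bigl(\|D_1^2f\|_1^{1/2}+\|D_2^2f\|_1^{1/2}\bigr)$, which contains the non-interpolated terms and is not literally the right-hand side of (\ref{final2}); this imprecision is inherited from the paper rather than introduced by you, but be aware that (\ref{final2}) as stated does not follow from the four estimates alone.
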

\begin{proof} Let $f\in W_1^{2,2}(\R^2)$. Then by Gagliardo-Nirenberg inequalities (see \cite{Ga}, \cite{Nir}), the
first order weak derivatives $D_1f$ and $D_2f$ exist and
\begin{equation}\label{multip}
||D_1 f||_1\le c||f||_1^{1/2}||D_{1}^2f||_1^{1/2}, \quad ||D_2
f||_1\le c||f||_1^{1/2}||D_{2}^2f||_1^{1/2}.
\end{equation}

For almost all $x\in \R$ we have
$$
||f_x||_{\operatorname{Lip} 1}^*\le ||D_2 f(x,\cdot)||_\infty\le
\frac12\int_\R |D_{2}^2 f(x,y)|\,dy.
$$
This implies the first inequality in (\ref{final1}); the second
inequality follows in the same way.

Further, for almost all $x\in \R$
$$
||f_x||_\infty\le \int_\R|D_2 f(x,y)|\, dy.
$$
Thus, by (\ref{multip}),
$$
\int_\R||f_x||_\infty\,dx\le c ||D_2 f||_1\le
c||f||_1^{1/2}||D_{2}^2f||_1^{1/2}.
$$
Similarly,
$$
\int_\R||f_y||_\infty\,dy\le c||f||_1^{1/2}||D_{1}^2f||_1^{1/2}.
$$
These estimates together with (\ref{final1}) imply (\ref{final2}).
\end{proof}

\end{document}